\documentclass[a4paper,12pt]{article}
\usepackage[a4paper,left=25mm,right=25mm,top=30mm,bottom=30mm]{geometry}
\usepackage{amsmath,  amsthm, amssymb}
\usepackage{graphicx} 
\numberwithin{equation}{section}

\newtheorem{lemma}{Lemma}

\begin{document}

\author{Ajai Choudhry}
\title{Construction of parametrized families of \\ general pseudo-bimagic and bimagic squares}

\date{}
\maketitle
\begin{abstract} A  magic square of order $n$ is a square array of $n^2$ distinct integers such that the integers of every row, every column  and  the two diagonals have the same common sum. If a magic  square remains a magic square even when each  integer  is replaced by its square, it is called bimagic. If, however, all the rows and all the columns of the aforementioned  square of squares add up to a common sum, but the two diagonals do not, the original magic square is called pseudo-bimagic. Except for one family of $6 \times 6$ pseudo-bimagic squares,  parametrized families of bimagic squares have not been published.  In this paper we construct  parametrized families of $6 \times 6$ pseudo-bimagic squares and $8 \times 8$ bimagic squares. We  use these families to construct, for any positive integer $m$,  multi-parameter families of  pseudo-bimagic squares of order $6m$  and bimagic squares of order $8m$. We also give  numerical examples of $6 \times 6$ and $12 \times 12$ pseudo-bimagic squares and an $8 \times 8$  bimagic square. 
\end{abstract}

Mathematics Subject Classification 2020:  11D09, 11D72.  
\medskip

Keywords: magic square; bimagic square; pseudo-bimagic square. 
\medskip

\section{Introduction}\label{intro}

A magic square of order $n$ is a square array of $n^2$ distinct integers such that the sums of the integers in each row and each column  as well as each of the two diagonals are all equal. If the sums for only the rows and the columns are equal, and the integers of the  diagonals do not add up to the same common sum, the square is said to be semi-magic. If  a magic square is made up of the first $n^2$ consecutive integers $1, 2, \ldots, n^2$, the square is said to be a  normal magic square, otherwise, it is called a general magic square. A bimagic square is a magic square that remains magic when all of its integers are replaced by their squares. A semi-bimagic square is a semi-magic square that remains semi-magic when all of its integers are replaced by their squares. 
Finally, we define a {\it pseudo-bimagic square} as a magic square that becomes a semi-magic square when all of its integers  are replaced by their squares. In each case, the common sum of the integers of each row, each column and each diagonal (if applicable) is called the magic sum.

While the construction of magic squares has a long history as may be seen in books by Andrews \cite{An} and Kraitchik \cite{Kr}, this paper is concerned with  bimagic squares. Considerable work has been done on the construction of normal bimagic squares. Pfeffermann constructed  a normal bimagic square of order 8  in 1890 (as quoted by Boyer \cite{Bo}), and since then numerous examples of normal bimagic squares of order $n \geq 8$ have been constructed by several mathematicians using various methods (\cite[Chapter 12, pp. 78--83 and Chapter 28, pp. 175--179]{BJ}, \cite{Bo}, \cite{CL}, \cite{CLP}, \cite{FG1}, \cite{FG2},
 \cite[pp. 177--78]{Kr} \cite{LPC}, \cite{Ta}, \cite{TG}). It is now known that a normal bimagic square of order $n$ does not exist when $ 2 \leq n \leq 7$ \cite{Bo2}.  Recently it was proved by Pan  et al. \cite{PLC} that a normal bimagic square of even order $2u$ exists if and only if $u \geq 4$. 

Much less is known about general bimagic squares.  It is known that general bimagic squares of orders  3 and 4 are impossible but we do not know anything about $5 \times 5$ bimagic squares \cite{Bo2}.  In 2006 Wroblewski and Morgenstern independently constructed a limited number of numerical examples of general bimagic squares of orders 6 and 7  and a few other numerical examples of higher order general bimagic squares are also known \cite{Bo2}. Iyer constructed a parametrized family of $6 \times 6$ general pseudo-bimagic squares. We have not seen in the published literature any other parametrized family of magic squares of orders 6 or 8 with any bimagic properties.

In this paper we will construct a parametrized family of general pseudo-bimagic squares of order 6, and another parametrized family of general bimagic squares of order 8. We will use these families to construct, for any positive integer $m > 1$,  families of $6m \times 6m$ pseudo-bimagic squares as well as  families of $8m \times 8m$ bimagic squares whose entries are given in terms of a large number of parameters. We also give  numerical examples of $6 \times 6$ and $12 \times 12$ pseudo-bimagic squares and an $8 \times 8$ general bimagic square.

\section{Preliminary lemmas}.  
In this section we first give three  preliminary lemmas for the construction of squares with bimagic properties. Following the notation used in \cite{CL}, if $A$ is any matrix, we will write  $A^{*2}$ to denote the matrix obtained by replacing each entry of the matrix $A$ by its square. Thus, $A$ will be a bimagic square if both $A$ and $A^{*2}$ are magic squares. We will use the words `bimagic matrix' and `bimagic square' interchangeably. 

The first lemma gives a very simple way of constructing a general bimagic square using a given bimagic square. This result is not new --- we state it here as a lemma as we need it in the sequel. The second lemma shows how a bimagic matrix of order $2n$ may be constructed starting from two $n \times n$ semi-magic matrices that satisfy certain conditions. For writing the bimagic matrix succinctly, we need the anti-diagonal matrix. For any positive integer $n$, we will denote the $n \times n$ anti-diagonal matrix by $J_n$. The last lemma shows how, under certain conditions,  a bimagic matrix of order $mn$ may be constructed using $m^2$ bimagic matrices of order $n$.

\begin{lemma}\label{simplelemma} If $X=[x_{ij}]$ is an $n \times n$  semi-bimagic, pseudo-bimagic or  bimagic matrix with rational entries, and $a$ and $b$ are arbitrary nonzero rational numbers with $a \neq 0$, then the matrix $Y=[y_{ij}]=[ax_{ij}+b]$ is also an $n \times n$  semi-bimagic, pseudo-bimagic or  bimagic matrix with rational entries according as $X$ is semi-bimagic, pseudo-bimagic or bimagic. 
\end{lemma}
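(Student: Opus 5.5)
The plan is a direct computation, line by line. Let $L$ denote any set of $n$ positions forming a row, a column, or one of the two diagonals. For such a line, expand the linear and quadratic line sums of $Y$ in terms of those of $X$:
\begin{align*}
\sum_{(i,j)\in L} y_{ij} &= a\sum_{(i,j)\in L} x_{ij} + nb,\\
\sum_{(i,j)\in L} y_{ij}^2 &= a^2\sum_{(i,j)\in L} x_{ij}^2 + 2ab\sum_{(i,j)\in L} x_{ij} + nb^2.
\end{align*}
The key point is that the square sum of $Y$ along $L$ involves only the square sum and the linear sum of $X$ along the same line $L$.

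First, for every line on which $X$ has linear sum $S_1$ (rows and columns in all three cases, and also the diagonals when $X$ is pseudo-bimagic or bimagic), $Y$ has linear sum $aS_1+nb$. This sum is independent of $L$, so $Y$ is semi-magic or magic exactly as $X$ is.

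Second, for every line on which $X$ has both linear sum $S_1$ and square sum $S_2$, the formula gives the constant $a^2S_2+2abS_1+nb^2$ for $Y$. Rows and columns have this property in all three cases, so $Y^{*2}$ is always semi-magic. The diagonals also have it when $X$ is bimagic, so in that case $Y^{*2}$ is magic. Combining the two steps, $Y$ is semi-bimagic, pseudo-bimagic or bimagic according to the type of $X$, and its entries are clearly rational.

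The one point that needs care is distinctness, since the definitions require $n^2$ distinct integers. The map $x\mapsto ax+b$ is injective because $a\neq0$, so distinct entries of $X$ go to distinct entries of $Y$.

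For the pseudo-bimagic case, the definition requires the diagonals of $Y^{*2}$ to fail to have the magic sum. Under the affine map, the diagonal square sums of $Y$ differ from the common value by exactly $a^2$ times the corresponding deviation for $X$, because the linear-sum terms agree along all lines. These deviations are nonzero since $X$ is pseudo-bimagic and $a\neq0$, so the diagonal sums of $Y^{*2}$ also fail to match. Similarly, if $X$ is semi-bimagic but not magic, $Y$ is not magic, because its diagonal linear sums deviate by $a$ times those of $X$.

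Beyond this bookkeeping, I expect no genuine obstacle.
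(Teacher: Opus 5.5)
Your proposal is correct and follows the paper's proof: the paper expands $(ax_{ij}+b)^2$ and records the new magic sums $as_1+nb$ and $a^2s_2+2abs_1+nb^2$, and these are exactly your two line-sum identities. Your extra remarks go beyond what the paper addresses, and both are valid: distinctness follows from injectivity of $x\mapsto ax+b$, and in the pseudo-bimagic case the diagonal deviation of the squares is scaled by $a^2$, which holds because $X$ is magic; the paper treats these notions non-exclusively, as in Lemma~\ref{bimagiclemma1} where the same $M$ is successively semi-bimagic, pseudo-bimagic and bimagic.
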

\begin{proof} The proof is straightforward --- the bimagic property of the matrix $Y$ follows immediately from the expansion of $(ax_{ij}+b)^2$. If the magic sums of the matrices $X$ and $X^{*2}$ are $s_1$ and $s_2$ respectively, the corresponding magic sums of the matrices $Y$ and $Y^{*2}$ are $as_1+nb$ and $a^2s_2+2abs_1+nb^2$, respectively.
\end{proof}

\begin{lemma}\label{bimagiclemma1} Let  $X=[x_{ij}]$ and $Y=[y_{ij}]$ be two $n \times n$ matrices such that  the two matrices  $X^{*2}$
and  $Y^{*2}$ are both semi-magic and have the same magic sum. 
 If the sums of the columns of the matrix $X$ are equal to the sums of the corresponding columns of the matrix $Y$, that is, for each $j, j=1, \ldots, n$, we have
\begin{equation}
\sum_{i=1}^nx_{ij}=  \sum_{i=1}^ny_{ij},
\label{condcolXY}
\end{equation}
then the $2n \times 2n$ matrix $M$ defined by
\begin{equation}
M=  \begin{bmatrix} X & -XJ_n \\ -J_nY & J_nYJ_n \end{bmatrix} \label{semibimagicM}
\end{equation}
is a semi-bimagic matrix. If the entries of the matrices $X$ and $Y$ also satisfy the condition,
\begin{equation}
\sum_{i=1}^n (x_{ii}+ y_{ii}) =0, \label{addlcond1XY} 
\end{equation}
then the matrix $M$ is a pseudo-bimagic matrix, and further, if the  entries of the matrices $X$ and $Y$ satisfy the additional condition,
\begin{equation}
\sum_{i=1}^n (x_{ii}^2+y_{ii}^2) =2\sum_{j=1}^n x_{1j}^2, \label{addlcond2XY}
\end{equation}
then the matrix $M$ is a bimagic matrix.
\end{lemma}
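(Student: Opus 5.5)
The plan is to write down the entries of the four blocks of $M$ explicitly and verify each required line sum directly. No earlier lemma is needed, only bookkeeping of how $J_n$ permutes entries. Right multiplication by $J_n$ reverses the order of the columns, and left multiplication reverses the order of the rows. So for $1\le i,j\le n$ we have
\[
(-XJ_n)_{ij}=-x_{i,n+1-j},\qquad (-J_nY)_{ij}=-y_{n+1-i,j},\qquad (J_nYJ_n)_{ij}=y_{n+1-i,n+1-j}.
\]
I will let $s$ denote the common magic sum of $X^{*2}$ and $Y^{*2}$.

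\emph{Step 1 (semi-magic property of $M$).}
\begin{itemize}
\item Row $i\le n$ of $M$ consists of row $i$ of $X$ followed by the same entries negated and reversed, so its sum is $0$.
\item Row $n+i$ consists of row $n+1-i$ of $Y$, negated, followed by the same row reversed, so its sum is again $0$.
\item Column $j\le n$ consists of column $j$ of $X$ together with column $j$ of $Y$ negated (in reversed order). Its sum is $\sum_i x_{ij}-\sum_i y_{ij}$, which vanishes by \eqref{condcolXY}.
\item Column $n+j$ consists of column $n+1-j$ of $X$ negated, together with column $n+1-j$ of $Y$. Its sum vanishes by \eqref{condcolXY} applied to the index $n+1-j$.
\end{itemize}
Thus $M$ is semi-magic with magic sum $0$. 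Notably, this needs no hypothesis on the row sums of $X$ or $Y$.

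\emph{Step 2 (semi-magic property of $M^{*2}$).} Squaring removes all the signs. Each row of $M^{*2}$ is therefore a row of $X^{*2}$ or of $Y^{*2}$ written twice, once in reversed order, so it sums to $2s$. Each column of $M^{*2}$ is the union of a column of $X^{*2}$ and a column of $Y^{*2}$, so it sums to $s+s=2s$. Here the equality of the two magic sums is exactly what is used. This establishes the semi-bimagic claim.

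\emph{Step 3 (diagonals).} This is the only place that needs care, and I expect it to be the main (though mild) obstacle, namely tracking indices through $J_n$.
\begin{itemize}
\item The main diagonal of $M$ has entries $x_{ii}$ in the top-left block and $(J_nYJ_n)_{ii}=y_{n+1-i,n+1-i}$ in the bottom-right block. Its sum is $\sum_i(x_{ii}+y_{ii})$.
\item The anti-diagonal passes through position $(i,n+1-i)$ of each off-diagonal block. There it meets $(-XJ_n)_{i,n+1-i}=-x_{ii}$ and $(-J_nY)_{i,n+1-i}=-y_{n+1-i,n+1-i}$. Its sum is $-\sum_i(x_{ii}+y_{ii})$.
\end{itemize}
Under \eqref{addlcond1XY}, both diagonal sums equal $0$, the magic sum from Step 1. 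So $M$ is magic and $M^{*2}$ is semi-magic, which means $M$ is pseudo-bimagic.

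Finally, both diagonals of $M^{*2}$ have sum $\sum_i(x_{ii}^2+y_{ii}^2)$. Since $s=\sum_j x_{1j}^2$ is the first row sum of $X^{*2}$, condition \eqref{addlcond2XY} says precisely that this diagonal sum equals $2s$, the magic sum of $M^{*2}$ from Step 2. Hence $M^{*2}$ is magic and $M$ is bimagic.
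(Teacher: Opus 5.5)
Your proposal is correct and takes the same route as the paper's proof. You write $M$ entrywise using how $J_n$ reverses rows and columns, check that every row and column sum of $M$ is $0$ (the columns via the column-sum hypothesis) and that every row and column sum of $M^{*2}$ is $2s$, and then read off the diagonal sums $\pm\sum_i(x_{ii}+y_{ii})$ for $M$ and $\sum_i(x_{ii}^2+y_{ii}^2)$ for $M^{*2}$; your explicit index tracking for the anti-diagonal just spells out what the paper calls ``readily observed.''
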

\begin{proof} 
Since $J_n$ is the anti-diagonal matrix of order $n$, the matrix $M$ defined by \eqref{semibimagicM} may be written explicitly as follows:
\begin{equation}
\begin{bmatrix}
x_{11} & x_{12} & \ldots & x_{1n} & -x_{1n} & -x_{1,n-1}  & \ldots & -x_{12} & -x_{11} \\
x_{21} & x_{22} & \ldots & x_{2n} & -x_{2n} & -x_{2,n-1}  &  \ldots & -x_{22} & -x_{21} \\
\vdots  & &  & & & & & & \\
x_{n1} & x_{n2} & \ldots & x_{nn} & -x_{nn} & -x_{n,n-1}   & \ldots & -x_{n2} & -x_{n1} \\
-y_{n1} & -y_{n2} & \ldots & -y_{nn} &   y_{nn} & y_{n, n-1} & \ldots & y_{n2} &  y_{n1} \\
\vdots & &  & & & & & & \\
-y_{21} & -y_{22} & \ldots & -y_{2n}  & y_{2n} & y_{2, n-1} & \ldots & y_{22} &  y_{21} \\
-y_{11} & -y_{12} & \ldots & -y_{1n} & y_{1n} & y_{1, n-1} & \ldots &  y_{12} & y_{11} 
\end{bmatrix}
\end{equation}

It is now readily observed that the sum of the entries of  each row of the matrix $M$ is 0. Further, in view of the relations \eqref{condcolXY}, the sum of the entries of each  column of the matrix $M$ is also 0. Thus, $M$ is a semi-magic matrix whose magic sum is 0.

Nest we consider the matrix $M^{*2}$. If the  magic sum of both the semi-magic matrices  $X^{*2}$
and  $Y^{*2}$ is $s$, the sum of the entries of each row and each column of the matrix $M$ is readily seen to be $2s$. It follows that the matrix $M$ is semi-bimagic. 

We now note that when the entries of the matrices $X$ and $Y$ also satisfy the condition \eqref{addlcond1XY}, the sum of the entries of both the  diagonals of the matrix $M$ is 0 (which is the magic sum of the semi-bimagic matrix $M$), and hence $M$ is a pseudo-bimagic matrix. Finally, when  the additional condition \eqref{addlcond2XY} is also  satisfied, the sum of the entries of each of   the two diagonals of the matrix $M^{*2}$ is the same and equal to $2s$ which is the magic sum of the matrix $M^{*2}$. Thus, $M$ is a bimagic matrix.  This proves the lemma.
\end{proof}

\begin{lemma}\label{bimagiclemma2} Let $U_{ij}, i=1, \ldots, m, j=1, \ldots, m$, be $n \times n$  matrices  that are all either semi-bimagic or pseudo-bimagic or bimagic. If the magic sums of all the matrices $U_{ij}$ are equal, and the  magic sums of all the matrices $U_{ij}^{*2}$ are also equal,  then the matrix $M=[U_{ij}]$ is a semi-bimagic,  pseudo-bimagic or bimagic matrix  according as the matrices $U_{ij}$ are semi-bimagic, pseudo-bimagic or bimagic. 
\end{lemma}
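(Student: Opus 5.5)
The plan is to verify the required line sums of $M=[U_{ij}]$ directly, by splitting each line of $M$ into the pieces contributed by the individual blocks $U_{ij}$. Let $s_1$ be the common magic sum of the matrices $U_{ij}$ and $s_2$ the common magic sum of the matrices $U_{ij}^{*2}$. Note first that $M^{*2}=[U_{ij}^{*2}]$, because squaring is done entrywise. So every statement about $M^{*2}$ reduces to the same statement about the blocks $U_{ij}^{*2}$, and it suffices to handle $M$ and $M^{*2}$ by one argument applied to the families $U_{ij}$ (magic sum $s_1$) and $U_{ij}^{*2}$ (magic sum $s_2$).

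\textbf{Step 1 (rows and columns).} Index the rows of $M$ as $(i-1)n+r$ with $1\le i\le m$ and $1\le r\le n$. Row $(i-1)n+r$ of $M$ is the concatenation of row $r$ of $U_{i1},U_{i2},\ldots,U_{im}$. Its sum is therefore $ms_1$, and the sum of its squares is $ms_2$. Columns are handled the same way, using the column sums of the blocks $U_{1j},\ldots,U_{mj}$. This uses only the semi-bimagic property of the blocks, so in all three cases $M$ is semi-bimagic, with magic sums $ms_1$ and $ms_2$.

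\textbf{Step 2 (diagonals).} The main diagonal of $M$ passes through exactly the blocks $U_{11},U_{22},\ldots,U_{mm}$. Inside each of these it runs along that block's main diagonal, since entry $((i-1)n+r,(i-1)n+r)$ of $M$ is entry $(r,r)$ of $U_{ii}$. The anti-diagonal of $M$ consists of entries $((i-1)n+r,\,mn+1-(i-1)n-r)$. A short index check shows this is entry $(r,n+1-r)$ of block $U_{i,m+1-i}$, so it passes through the anti-diagonals of the blocks $U_{1m},U_{2,m-1},\ldots,U_{m1}$.
\begin{itemize}
\item If the blocks are pseudo-bimagic or bimagic, each block diagonal sums to $s_1$, so both diagonals of $M$ sum to $ms_1$.
\item If the blocks are bimagic, the same reasoning applied to $U_{ij}^{*2}$ gives diagonal sums $ms_2$ for $M^{*2}$.
\end{itemize}
Hence $M$ is pseudo-bimagic or bimagic according as the blocks are.

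\textbf{Main obstacle.} The only step needing care is the index bookkeeping for the anti-diagonal in Step 2, which is routine. A genuine subtlety is distinctness of entries if ``magic square'' requires it. The hypotheses do not guarantee that different blocks have disjoint entries, so I would read the lemma, as the paper does with the term ``matrix'', as a statement about the sum conditions only. Distinctness would then have to be secured separately when the lemma is applied, for instance by choosing the blocks via Lemma~\ref{simplelemma} with suitable parameters.
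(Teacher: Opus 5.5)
Your proof is correct and follows the same route as the paper. Both arguments use $M^{*2}=[U_{ij}^{*2}]$ and get row and column sums $ms_1$ and $ms_2$ block by block, and both note that the diagonals of $M$ run through the diagonals of $U_{11},\ldots,U_{mm}$ and the anti-diagonals of $U_{1m},\ldots,U_{m1}$; you just write out the index check and the distinctness caveat that the paper leaves as ``readily seen.''
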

\begin{proof}
We note that the matrix $M$ is order $mn$. If $s_1$ and $s_2$ are  the common magic sums of the matrices $U_{ij}$ and $U_{ij}^{*2}$, respectively, it is readily seen that all the  rows and columns of the matrix $M$ have the same magic sum $ms_1$. Similarly, all the rows and columns of the  matrix $M^{*2}$ have the same magic sum $ms_2$. If the matrices $U_{ij}$ are all pseudo-bimagic or bimagic, the diagonals of the matrices $M$ and $M^{*2}$ are also readily seen to satisfy the corresponding magic properties. This proves the lemma.
\end{proof}

\section{Parametrized families of  pseudo-bimagic  squares \\ of order $6m$}\label{bimagic6} -
We will now construct,  in Section \ref{Secpseudobimagic66}, a four-parameter family of $6 \times 6$ pseudo-bimagic matrices, and we will use it in the next subsection to construct a multi-parameter family of $6m \times 6m$  pseudo-bimagic matrices where $m$ is any positive integer greater than 1.
\subsection{A family of $6 \times 6$ pseudo-bimagic squares} \label{Secpseudobimagic66}
Euler had given the  matrix $E$ defined by
\begin{equation}
E=\begin{bmatrix}
p^2+q^2-r^2-s^2    &       2   q   r+2   p   s    &       2   q   s-2   p   r  \\[0.05in]      2   q   r-2   p   s    &       p^2-q^2+r^2-s^2    &       2   p   q+2   r   s  \\[0.05in]      2   q   s+2   p   r    &       2   r   s-2   p   q    &       p^2-q^2-r^2+s^2
\end{bmatrix}
\end{equation}
where $p, q, r$, and $s$ are arbitrary parameters and such that the matrix  $E^{*2}$ is semi-magic with its magic  sum being $(p^2 + q^2 + r^2 + s^2)^2$. 

We begin our construction of $6 \times 6$ pseudo-bimagic matrices by first interchanging the first two columns of the matrix $E$ and in the resulting matrix, we interchange the second and third columns to get the  matrix  $E_1(p, q, r, s)$ given by
\begin{equation}
E_1(p, q, r, s)=\begin{bmatrix} 
2 p s+2 q r     &     -2 p r+2 q s     &     p^2+q^2-r^2-s^2   \\[0.05in]    p^2-q^2+r^2-s^2     &     2 p q+2 r s     &     -2 p s+2 q r   \\[0.05in]    -2 p q+2 r s     &     p^2-q^2-r^2+s^2     &     2 p r+2 q s
\end{bmatrix}
\label{euler33a}
\end{equation}
Since interchanging any two columns does not alter either the sums of the entries of the rows  or the columns, the matrix $E_1^{*2}(p, q, r, s)$ is also  semi-magic with the same magic sum $(p^2 + q^2 + r^2 + s^2)^2$.  

We now construct two matrices $X$ and $Y$ by replacing the parameters $p, q, r$, and $s$ in the matrix $E_1(p, q, r, s)$ first by new parameters $p_1, q_1, r_1$, and $s_1$, and then by parameters $p_2, q_2, r_2$, and $s_2$, that is, 
\begin{equation}
X = E_1(p_1, q_1, r_1, s_1), \quad Y= E_1(p_2, q_2, r_2, s_2).
\label{matricesXY33}
\end{equation}
The two  matrices $X^{*2}$ and $Y^{*2}$ are both semi-magic and their magic sums are $(p_1^2 + q_1^2 + r_1^2 + s_1^2)^2$ and $(p_2^2 + q_2^2 + r_2^2 + s_2^2)^2$, respectively.

We will now choose the parameters $p_i, q_i, r_i$, and $ s_i, i=1, 2$, such that the matrices $X$ and $Y$ satisfy the conditions \eqref{condcolXY} of Lemma \ref{bimagiclemma1}. Accordingly, we  impose the following three conditions that would make the sums of the entries of the three columns of the matrix $X$  equal to the corresponding sums for the matrix $Y$:
\begin{multline}
p_1^2 - 2p_1q_1 + 2p_1s_1 - q_1^2 + 2q_1r_1 + r_1^2 + 2r_1s_1 - s_1^2 \\
 = p_2^2 - 2p_2q_2 + 2p_2s_2 - q_2^2 + 2q_2r_2 + r_2^2 + 2r_2s_2 - s_2^2, 
\label{condcol1}
\end{multline}
\begin{multline}
p_1^2 + 2p_1q_1 - 2p_1r_1 - q_1^2 + 2q_1s_1 - r_1^2 + 2r_1s_1 + s_1^2 \\
 = p_2^2 + 2p_2q_2 - 2p_2r_2 - q_2^2 + 2q_2s_2 - r_2^2 + 2r_2s_2 + s_2^2, 
\label{condcol2}
\end{multline}
\begin{multline}
p_1^2 + 2p_1r_1 - 2p_1s_1 + q_1^2 + 2q_1r_1 + 2q_1s_1 - r_1^2 - s_1^2 \\
= p_2^2 + 2p_2r_2 - 2p_2s_2 + q_2^2 + 2q_2r_2 + 2q_2s_2 - r_2^2 - s_2^2.
\label{condcol3}
\end{multline}

We note that if we square both sides of each of the three equations \eqref{condcol1}, \eqref{condcol2}, and \eqref{condcol3}, and add them up, we get 
\begin{equation}
3(p_1^2 + q_1^2 + r_1^2 + s_1^2)^2 = 3(p_2^2 + q_2^2 + r_2^2 + s_2^2)^2.
\end{equation}
It follows that when Eqs. \eqref{condcol1}, \eqref{condcol2}, and \eqref{condcol3} are satisfied,  the magic sums of the matrices 
$X^{*2}$ and $Y^{*2}$ will automatically be equal.

We will now solve Eqs. \eqref{condcol1}, \eqref{condcol2}, and \eqref{condcol3} by writing
\begin{equation}
\begin{aligned}
p_1    & =   m_1e_1+m_2e_2,    &   p_2    & =   m_1e_1-m_2e_2,   &   q_1    & =   m_1f_1+m_2f_2,   &   q_2    & =   m_1f_1-m_2f_2,\\
r_1    & =   m_1g_1+m_2g_2,   &   r_2    & =   m_1g_1-m_2g_2,   &   s_1    & =   m_1h_1+m_2h_2,   &   s_2    & =   m_1h_1-m_2h_2,
\end{aligned}
\label{subspqrs}
\end{equation}
where $e_i, f_i, g_i, h_i$, and $m_i, i=1, 2$, are arbitrary parameters. On substituting these values in the three equations \eqref{condcol1}, \eqref{condcol2}, and \eqref{condcol3}, transposing all terms to the left-hand side and factoring out $4m_1 m_2$, we get the following three equations, respectively:
\begin{align}
(e_1 - f_1 + h_1)e_2 + (-e_1 - f_1 + g_1)f_2 + (f_1 + g_1 + h_1)g_2 + (e_1 + g_1 - h_1)h_2& =0, \label{condcol1a} \\
(e_1 + f_1 - g_1)e_2 + (e_1 - f_1 + h_1)f_2 + (-e_1 - g_1 + h_1)g_2 + (f_1 + g_1 + h_1)h_2 & =0, \label{condcol2a}\\
(e_1 + g_1 - h_1)e_2 + (f_1 + g_1 + h_1)f_2 + (e_1 + f_1 - g_1)g_2 + (-e_1 + f_1 - h_1)h_2 & =0,\label{condcol3a}
\end{align}

Since  Eqs. \eqref{condcol1a}, \eqref{condcol2a}, and \eqref{condcol3} may be considered as three homogeneous linear equations in the variables $e_2, f_2, g_2$, and $h_2$,   their complete solution is readily obtained and is given by
\begin{equation}
e_2 = -f_1 - g_1 - h_1, \quad f_2 = e_1 + g_1 - h_1,\quad g_2 = e_1 - f_1 + h_1, \quad h_2 = e_1 + f_1 - g_1.
\label{sole2f2}
\end{equation}
Thus, a solution of Eqs. \eqref{condcol1}, \eqref{condcol2}, and \eqref{condcol3} is given by \eqref{subspqrs} where the values of $e_2, f_2, g_2$, and $h_2$ are given by \eqref{sole2f2}. Using the relations \eqref{subspqrs} and \eqref{sole2f2}, the  values of the parameters  $p_i, q_i, r_i, s_i$, $i=1, 2$, and hence also the matrices $X$ and $Y$ may be expressed explicitly in terms of arbitrary parameters $e_1, f_1, g_1, h_1, m_1$, and $m_2$. The  matrices $X^{*2}$ and $Y^{*2}$ have the same  magic sum and  the conditions \eqref{condcolXY} of Lemma \ref{bimagiclemma1} are now satisfied. Thus, on applying Lemma \ref{bimagiclemma1}, we can already obtain a semi-bimagic matrix of order $6 \times 6$. 

We will now choose the parameters $e_1, f_1, g_1, h_1, m_1$, and $m_2$ such that the matrices $X$ and $Y$ satisfy the  condition \eqref{addlcond1XY} of Lemma \ref{bimagiclemma1}. This condition may be written as follows:
\begin{multline}
12e_1^2m_2^2 + (4m_1^2 - 12m_2^2)e_1f_1 + (4m_1^2 - 12m_2^2)e_1g_1 + (4m_1^2 - 12m_2^2)e_1h_1\\
 - 4f_1^2m_2^2 + (4m_1^2 + 4m_2^2)f_1g_1 + (4m_1^2 + 4m_2^2)f_1h_1 - 4g_1^2m_2^2 \\
+ (4m_1^2 + 4m_2^2)g_1h_1 - 4h_1^2m_2^2 =0.
\label{condefgh1}
\end{multline}
Since Eq. \eqref{condefgh1} may be considered as a homogeneous equation of degree 2 in the variables $e_1, f_1, g_1$, and $h_1$, we can find its complete solution if one solution can be found. While Eq. \eqref{condefgh1} does not have a nontrivial solution for arbitrary values of $m_1$ and $m_2$, we found, with some experimentation, that if we take
\begin{equation}
m_1 = t^2 - 3, \quad  m_2 = 2t, \label{trym}
\end{equation}
where $t$ is an arbitrary parameter, Eq. \eqref{condefgh1} has a simple solution $(e_1, f_1, g_1, h_1)=(-t^2-3, -2t, -2t, 4t)$. Thus, when $m_1, m_2$ are given by \eqref{trym}, the complete solution of Eq. \eqref{condefgh1} is readily found and is given by
\begin{equation}
\begin{aligned}
e_1   &  =   (t^2 + 3)(12t^2u_1^2 - 2t(t^2 + 3)u_1u_2 - 2t(t^2 + 3)u_1u_3\\
& \quad \quad  +4t^2u_2^2 - (t^4 - 2t^2 + 9)u_2u_3 + 4t^2u_3^2), \\
f_1   &  =   -24t^3u_1^2 - 2t(t^4 - 12t^3 - 18t^2 - 36t + 9)u_1u_2- 2t(t^4 - 18t^2 + 9)u_1u_3 \\
& \quad \quad  + (t^6 - 2t^5 - 15t^4 - 4t^3 - 45t^2 - 18t + 27)u_2^2\\
& \quad \quad  + (t^6 - 4t^5 - 15t^4 - 8t^3 - 45t^2 - 36t + 27)u_2u_3 + 8t^3u_3^2, \\
g_1   &  =   -24t^3u_1^2 - 2t(t^4 - 18t^2 + 9)u_1u_2  - 2t(t^4 - 12t^3 - 18t^2 - 36t + 9)u_1u_3  \\
& \quad \quad + 8t^3u_2^2+ (t^6 - 4t^5 - 15t^4 - 8t^3 - 45t^2 - 36t + 27)u_2u_3 \\
& \quad \quad + (t^6 - 2t^5 - 15t^4 - 4t^3 - 45t^2 - 18t + 27)u_3^2, \\
h_1   &  =   4t(12t^2u_1^2 + (t^4 - 18t^2 + 9)u_1u_2 + (t^4 - 18t^2 + 9)u_1u_3\\
& \quad \quad  - 4t^2u_2^2 + (t^4 - 2t^2 + 9)u_2u_3 - 4t^2u_3^2),
\end{aligned}
\label{valefgh1}
\end{equation}
where $u_1, u_2$, and $u_3$ are arbitrary parameters. 

When the values of the parameters $p_i, q_i, r_i$, and $s_i$ are defined by \eqref{subspqrs} where the values of $e_2, f_2, g_2, h_2, m_1$, and $m_2$ are given by \eqref{sole2f2} and \eqref{trym}, and $e_1, f_1, g_1$, and $h_1$ are given by \eqref{valefgh1}, the semi-magic matrices $X$ and $Y$ defined by \eqref{matricesXY33} have the same magic sum and they satisfy the conditions \eqref{condcolXY} and \eqref{addlcond1XY} stipulated in Lemma \ref{bimagiclemma1}. Using the relations \eqref{subspqrs}, \eqref{sole2f2},  \eqref{trym}, and \eqref{valefgh1}, the entries of the matrices $X$ and $Y$ may be expressed in terms of four arbitrary  parameters $t, u_1, u_2$, and $u_3$. Now on applying Lemma \ref{bimagiclemma1} to the matrices $X$ and $Y$, we get a pseudo-bimagic matrix $M$, in terms of arbitrary parameters $t, u_1, u_2$, and $u_3$, which may be written as follows:
\begin{equation}
M(t, u_1, u_2, u_3)=  \begin{bmatrix} X & -XJ_3 \\ -J_3Y & J_3YJ_3 \end{bmatrix} \label{pseudobimagicM}
\end{equation}

The magic sum of the pseudo-bimagic matrix $M(t, u_1, u_2, u_3)$  is 0. Further, since the magic sums of the semi-magic matrices $X$ and $Y$ are given by $p_i^2+q_i^2+r_i^2+s_i^2$, $i=1, 2$, respectively, and these sums are equal, the magic sum of the matrix $M^{*2}(t, u_1, u_2, u_3)$ may be written, in terms of the arbitrary parameters $t, u_1, u_2$, and $u_3$, as $2\phi^2(t, u_1, u_2, u_3)$ where the value of  $\phi(t, u_1, u_2, u_3)$ can be worked out. Since the matrices $X, Y$, and  the value of $\phi(t, u_1, u_2, u_3)$ are too cumbersome to write, we do not give them explicitly

We give below a one-parameter family of pseudo-bimagic matrices derived from the  general four-parameter family of pseudo-bimagic matrices obtained above. Taking $(u_1, u_2$, $u_3)=(1, 0, 0)$, the four-parameter matrices $X$ and $Y$ mentioned in \eqref{pseudobimagicM} may be expressed in terms of a single parameter $t$. In fact, all the entries of the matrices $X$ and $Y$ have a common factor $144t^4$ which can be factored out in view of Lemma \ref{simplelemma},  and we write the resulting matrices as $X(t)$ and $Y(t)$, respectively. Denoting the polynomial $\sum_{j=0}^8c_jt^{8-j}$ by $(c_0, c_1, \ldots, c_8)$, the matrices  $X(t)$ and $Y(t)$ may be written as follows:

\scriptsize
\begin{equation*}
X(t)=\begin{bmatrix} 
 12(0, 1, 0, -1, -24, -9, 24, 9, 0)   & -24(0, 0, 1, 7, -6, -6, -3, -9, 0)   & (1, 0, -36, 0, 54, -576, -36, 0, 81)\\
(1, 0, -36, 0, 54, 576, -36, 0, 81)   &  -24(0, 0, 1, -7, -6, 6, -3, 9, 0) &  -12(0, 1, 0, -1, 24, -9, -24, 9, 0)\\
24(0, 0, 1, 5, 6, -6, -15, 9, 0)      &  (1, 0, 36, 0, -378, 0, -252, 0, 81)  & 24(0, 0, 1, -5, 6, 6, -15, -9, 0)
\end{bmatrix},
\label{defX33}
\end{equation*}

\begin{equation*}
Y(t)=\begin{bmatrix} 
 4(0, 1, 8, -9, -72, -9, 0, 81, 0)    &  8(0, 1, 1, 6, 18, -63, -27, 0, 0)  & (1, 0, -4, -192, 54, 0, -324, 0, 81) \\
(1, 0, -4, 192, 54, 0, -324, 0, 81)   &   -8(0, 1, -1, 6, -18, -63, 27, 0, 0)  & -4(0, 1, -8, -9, 72, -9, 0, 81, 0) \\
8(0, 1, -5, -6, 18, 45, 27, 0, 0)     & (1, 0, -28, 0, -378, 0, 324, 0, 81)   & -8(0, 1, 5, -6, -18, 45, -27, 0, 0)
\end{bmatrix}.
\label{defY33}
\end{equation*}
\normalsize

It now follows that the matrix $M(t)$ defined by
\begin{equation}
M(t)=  \begin{bmatrix} X(t) & -X(t)J_3 \\ -J_3Y(t) & J_3Y(t)J_3 \end{bmatrix} \label{pseudobimagicMt}
\end{equation}
is a one-parameter family of $6 \times 6$ pseudo-bimagic matrices. The magic sums of the matrices $M(t)$ and $M^{*2}(t)$ are 0 and  
$2\{(t^4 + 30t^2 + 9)(t^2 + 3)^2\}^2$, respectively. 

As a numerical example, on taking $t=2$ in the matrix $M(t)$, we get the pseudo-bimagic matrix $M_1$ given by
\begin{equation}
M_1=\begin{bmatrix}  -2952  &   -2736  &   -5855  &   5855  &   2736  &   2952\\ 3361  &   4848  &   -3960  &   3960  &   -4848  &   -3361\\ 5520  &   -4415  &   -720  &   720  &   4415  &   -5520\\ -2976  &   6207  &   1760  &   -1760  &   -6207  &   2976\\ -5793  &   -3424  &   2280  &   -2280  &   3424  &   5793\\ 2840  &   -480  &   6495  &   -6495  &   480  &   -2840
\end{bmatrix}.
\label{defM166}
\end{equation}
The magic sums of the matrices $M_1$ and $M_1^{*2}$ are 0 and $100962050 = 2.7105^2$, respectively.

In view of Lemma \ref{simplelemma}, we now add the constant 6496 to each entry of the matrix $M_1$ and obtain the following pseudo-bimagic matrix $P$ whose entries are all positive integers:
\begin{equation}
P=\begin{bmatrix} 3544   &    3760   &    641   &    12351   &    9232   &    9448   \\ 
   9857   &    11344   &    2536   &    10456   &    1648   &    3135   \\  
		12016   &    2081   &    5776   &    7216   &    10911   &    976   \\ 
			3520   &    12703   &    8256   &    4736   &    289   &    9472   \\
					703   &    3072   &    8776   &    4216   &    9920   &    12289   \\
							9336   &    6016   &    12991   &    1   &    6976   &    3656
\end{bmatrix}.
\label{defM1}
\end{equation}
We note that the entries of the matrix $P$ are all distinct. Further, the magic sums of the matrices $P$ and $P^{*2}$ are 38976 and 354150146, respectively.

\subsection{A multi-parameter family of $6m \times 6m$ pseudo-bimagic squares} \label{Secpseudobimagicsquares6mx6m}
In Section \ref{Secpseudobimagic66}, we obtained a pseudo-bimagic matrix $M(t, u_1, u_2, u_3)$, defined by \eqref{pseudobimagicM}, in terms of four arbitrary parameters $t, u_1, u_2$, and $u_3$, such that the magic sums of the matrices $M$ and $M^{*2}$ are 0 and $2\phi^2(t, u_1, u_2, u_3)$, respectively. It now follows from Lemma \ref{simplelemma} that if we divide each entry of the matrix $M$ by $\phi(t, u_1, u_2, u_3)$, we get a pseudo-bimagic matrix $M_1(t, u_1, u_2, u_3) =M(t, u_1, u_2, u_3)/\phi(t, u_1, u_2, u_3)$ such that the magic sums of the matrices $M_1(t, u_1, u_2, u_3) $ and $M_1 ^{*2}(t, u_1, u_2, u_3)$ are 0 and $2$, respectively. By assigning different values to the parameters $t, u_1, u_2$, and $u_3$, we can obtain an arbitrary number of $6 \times 6$ pseudo-bimagic matrices with the same magic sum 0, and the same magic sum 2 when the entries of these matrices are replaced by their squares. To construct a pseudo-bimagic matrix of order $6m$ for any positive integer $m > 1$, we first construct $m^2$ such matrices and then apply Lemma \ref{bimagiclemma2}. Further, on suitably applying Lemma \ref{simplelemma}, we can obtain a pseudo-bimagic matrix whose entries are multivariate polynomials with integer coefficients. With a suitable choice of parameters, we can readily ensure that the entries of the $6m \times 6m$ pseudo-bimagic matrix are all distinct. 

A simpler parametrized family of pseudo-bimagic matrices of order $6m$ is obtained by using the one-parameter pseudo-bimagic matrix $M(t)$ defined by \eqref{pseudobimagicMt}. On dividing each entry of the matrix $M(t)$ by $(t^4 + 30t^2 + 9)(t^2 + 3)^2$, we get the $6 \times 6$ matrix $U(t)=M(t)/\{(t^4 + 30t^2 + 9)(t^2 + 3)^2\}$ which, in view of Lemma \ref{simplelemma} is a pseudo-bimagic matrix such that the magic sums of the matrices $U(t)$ and $U^{*2}(t)$ are 0 and 2, respectively. For any positive integer $m > 1$, we may now construct $m^2$ pseudo-bimagic matrices $U_{ij} =U(t_{ij}), i=1, \ldots, m,  j=1, \ldots, m$, where $t_{ij}, i=1, \ldots, m,  j=1, \ldots, m$, are arbitrary parameters. Since all the matrices $U_{ij}$ are of order 6, on applying Lemma \ref{bimagiclemma2}, we get the $6m \times 6m$ matrix $U =[U_{ij}]$ which is a pseudo-bimagic matrix such that the magic sums of the matrices $U$ and $U^{*2}$ are 0 and $2m$, respectively. As before, we may apply Lemma \ref{simplelemma} to get a pseudo-bimagic matrix whose entries are multivariate polynomials with integer coefficients. 

As a numerical example, we will now construct a $12 \times 12$ pseudo-bimagic matrix. We will use the simple one-parameter $6 \times 6$ pseudo-bimagic matrix $M(t)$, defined  by \eqref{pseudobimagicMt}, to construct four pseudo-bimagic matrices. We have already obtained a numerical $6 \times 6$ pseudo-bimagic matrix $M_1$ defined by \eqref{defM166} such that the magic sums of the matrices $M_1$ and $M_1^{*2}$ are 0 and $2.7105^2$, respectively. By assigning to $t$ the three values 5, 6, and 7, respectively, in the matrix $M(t)$ defined by  \eqref{pseudobimagicMt}, we get three matrices which we multiply by the scalars $1/128, 1/81$, and $1/128$, respectively, to get the following three $6 \times 6$ pseudo-bimagic matrices:
\begin{equation*}
\begin{aligned}
M_2 & =\begin{bmatrix}  
 5580  &      -6165  &      -1648  &      1648  &      6165  &      -5580  \\     -523  &      1740  &      -8280  &      8280  &      -1740  &      523  \\     6360  &      5552  &      765  &      -765  &      -5552  &      -6360  \\     75  &      2148  &      8200  &      -8200  &      -2148  &      -75  \\     -7452  &      3925  &      -960  &      960  &      -3925  &      7452  \\     -4040  &      -7200  &      1923  &      -1923  &      7200  &      4040
\end{bmatrix},
\end{aligned}
\end{equation*}
\medskip

\begin{equation*}
\begin{aligned}
M_3 & =\begin{bmatrix}  
35560  &      -27216  &      -687  &      687  &      27216  &      -35560  \\     2385  &      4240  &      -44520  &      44520  &      -4240  &      -2385  \\     27120  &      35313  &      4816  &      -4816  &      -35313  &      -27120  \\     -3360  &      1295  &      44640  &      -44640  &      -1295  &      3360  \\     -37585  &      24096  &      -3528  &      3528  &      -24096  &      37585  \\     -24120  &      -37728  &      -721  &      721  &      37728  &      24120
\end{bmatrix},
\end{aligned}
\end{equation*}
\medskip
\begin{equation*}
\begin{aligned}
M_4 & =\begin{bmatrix}  
 70056  &      -40992  &      11405  &      -11405  &      40992  &      -70056  \\     14492  &      2331  &      -80640  &      80640  &      -2331  &      -14492  \\     40005  &      70940  &      9240  &      -9240  &      -70940  &      -40005  \\     -12152  &       -12336  &      80115  &      -80115  &      12336  &      12152  \\     -67461  &      46452  &      -3080  &      3080  &      -46452  &      67461  \\     -44940  &      -66395  &      -17040  &      17040  &      66395  &      44940
\end{bmatrix}.
\end{aligned}
\end{equation*}
\medskip

We have already noted in Section \ref{Secpseudobimagic66} that the magic sums of the pseudo-bimagic matrix $M(t)$ and $M^{*2}$ are 0 and $2\{(t^4 + 30t^2 + 9)(t^2 + 3)^2\}^2$, respectively. Accordingly, the magic sums of the  matrices $M_2, M_3$, and $M_4$ are all 0 while the the magic sums of the  matrices $M_2^{*2}, M_3^{*2}$, and $M_4^{*2}$ are $2.8477^2, 2.44785^2$, and $2.81965^2$, respectively. We now construct four new pseudo-bimagic matrices $U_1=M_1/7105, U_2=M_2/8477, U_3=M_3/44785$, and $U_4 =M_4/81965$. The magic sums of all four matrices $U_i, i=1, \ldots, 4$, are 0, and the magic sums of all four matrices $U_i^{*2}, i=1, \ldots, 4$, are 2. Accordingly, we may apply Lemma \ref{bimagiclemma2} to construct the following $12 \times 12$ pseudo-bimagic matrix:
\begin{equation}
U= \begin{bmatrix} U_1 & U_2\\ U_3 & U_4 \end{bmatrix}
\label{defU12}
\end{equation}
The entries of the matrix $U$ are all distinct rational numbers. Further, the magic sums of the matrices $U$ and $U^{*2}$ are 0 and 4, respectively. We can now apply Lemma 1 and obtain a numerical example in positive integers of a $12 \times 12$ pseudo-bimagic matrix. As this matrix involves  integers consisting of 15 digits, we do not give it explicitly.

\section{Parametrized families of bimagic squares of \\order $8m$}\label{bimagic8}
As in Section \ref{Secpseudobimagic66}, we  will first  construct, in the next subsection, a  family of $8 \times 8$ bimagic matrices, and then  use it  to construct a multi-parameter family of $8m \times 8m$ bimagic matrices where $m$ is any positive integer greater than 1.

\subsection{A family of $8 \times 8$ bimagic squares}\label{bimagicsquares8x8}
We will  construct a parametrized $8 \times 8$ bimagic square by starting from the following $4 \times 4$ matrix constructed by Euler:

\small
\begin{equation*}
E=\begin{bmatrix} 
a p+b q+c r+d s  &  a r-b s-c p+d q  &  -a s-b r+c q+d p  &  a q-b p+c s-d r   \\[0.05in]
  -a q+b p+c s-d r  &  a s+b r+c q+d p  &  a r-b s+c p-d q  &  a p+b q-c r-d s   \\[0.05in] 
		a r+b s-c p-d q  &  -a p+b q-c r+d s  &  a q+b p+c s+d r  &  a s-b r-c q+d p   \\[0.05in]
			-a s+b r-c q+d p  &  -a q-b p+c s+d r  &  -a p+b q+c r-d s  &  a r+b s+c p+d q  
\end{bmatrix},
\label{Euler}
\end{equation*}
\normalsize
where $a, b, c, d, p, q, r$,  and $s$ are arbitrary parameters. Euler has shown that the matrix $E^{*2}$ is semi-magic and its magic sum  $\sigma$ is given by
\begin{equation}
\sigma = (p^2 + q^2 + r^2 + s^2)(a^2 + b^2 + c^2 + d^2). \label{csumsqE}
\end{equation}

We now  choose  parameters $a, b, c$, and $d$ such that  that the sums of the four columns of the matrix $E$ may be written as $t_1, t_2, t_3$, and $t_4$ respectively. Thus  the parameters $a, b, c$, and $d$ must satisfy  four linear conditions which may be written succinctly as follows:
\begin{equation}
\begin{bmatrix}
p - q + r - s   &   p + q + r + s   &   -p - q + r + s   &   p - q - r + s   \\[0.05in] -p - q + r + s   &   -p + q + r - s   &   -p + q - r + s   &   p + q + r + s   \\[0.05in] -p + q + r - s   &   p + q - r - s   &   p + q + r + s   &   p - q + r - s   \\[0.05in] p + q + r + s   &   -p + q - r + s   &   p - q - r + s   &   p + q - r - s
\end{bmatrix}
\begin{bmatrix} a    \\[0.05in]  b     \\[0.05in] c    \\[0.05in] d \end{bmatrix}=
\begin{bmatrix} t_1    \\[0.05in] t_2    \\[0.05in] t_3    \\[0.05in] t_4 \end{bmatrix} 
\label{matrixeqn1}
\end{equation}
On pre-multiplying both sides of Eq. \eqref{matrixeqn1} by the invertible matrix
\begin{equation}
\begin{bmatrix}
 1   &   1   &   1   &   1   \\[0.05in]    1   &   1   &   -1   &   -1
   \\[0.05in]   1   &   -1   &   1   &   -1   \\[0.05in]    1   &   -1   &   -1   &   1
\end{bmatrix}
\end{equation}
and then dividing by 4, we get the matrix equation
\begin{equation}
\begin{bmatrix}
r   &   q   &   s   &   p   \\[0.05in]   -q   &   r   &   -p   &   s
   \\[0.05in]    -s   &   p   &   r   &   -q   \\[0.05in]    p   &   s   &   -q   &   -r
\end{bmatrix}
\begin{bmatrix} a    \\[0.05in]  b     \\[0.05in] c    \\[0.05in] d \end{bmatrix}=
\begin{bmatrix} 
(t_1 + t_2 + t_3 + t_4)/4 \\[0.05in] (t_1 + t_2 - t_3 - t_4)/4 \\[0.05in] (t_1 - t_2 + t_3 - t_4)/4 \\[0.05in](t_1 - t_2 - t_3 + t_4)/4
\end{bmatrix}
\label{matrixeqn2}
 \end{equation}
On re-writing the column vector on the right-hand side of Eq. \eqref{matrixeqn2} as the column vector $\{u_1, u_2, u_3, u_4\}$, we get the following values of $t_i, i=1, \ldots, 4$:
\begin{equation}
\begin{aligned}
 t_1   & =  u_1 + u_2 + u_3 + u_4,\\
 t_2   & =  u_1 + u_2 - u_3 - u_4, \\
t_3   & =  u_1 - u_2 + u_3 - u_4, \\
t_4   & =  u_1 - u_2 - u_3 + u_4,
\end{aligned}
\label{valt}
\end{equation}
and the matrix equation \eqref{matrixeqn2} may be written as 
 \begin{equation}
\begin{bmatrix}
r   &   q   &   s   &   p   \\[0.05in]   -q   &   r   &   -p   &   s
   \\[0.05in]    -s   &   p   &   r   &   -q   \\[0.05in]    p   &   s   &   -q   &   -r
\end{bmatrix}
\begin{bmatrix} a    \\[0.05in]  b     \\[0.05in] c    \\[0.05in] d \end{bmatrix}=
\begin{bmatrix}
 u_1    \\[0.05in] u_2    \\[0.05in] u_3    \\[0.05in] u_4 \end{bmatrix}.
\label{matrixeqn3}
\end{equation}

On solving Eq. \eqref{matrixeqn3}, we get the following values for $a, b, c$, and $d$:
\begin{equation}
\begin{aligned}
 a    & =  (pu_4 - qu_2 + ru_1 - su_3)/(p^2 + q^2 + r^2 + s^2), \\
b    & =  (pu_3 + qu_1 + ru_2 + su_4)/(p^2 + q^2 + r^2 + s^2), \\
c    & =  -(pu_2 + qu_4 - ru_3 - su_1)/(p^2 + q^2 + r^2 + s^2), \\
d    & =  (pu_1 - qu_3 - ru_4 + su_2)/(p^2 + q^2 + r^2 + s^2).
\end{aligned}
\label{valab}
\end{equation}

We now substitute the values of $a, b, c$, and $d$ given by \eqref{valab} in the matrix $E$ to get a new matrix $E_1$ which is now in terms  of arbitrary parameters $p, q, r, s, u_1, u_2, u_3$ and $u_4$ and  to make this fact explicit, wherever necessary, we will denote the matrix $E_1$ as $E_1(p, q, r, s, u_1, u_2, u_3, u_4)$. Since $E^{*2}$ is a semi-magic matrix, the matrix  $E_1^{*2}$ is also semi-magic and its magic sum, obtained by  using the values of $a, b, c$, and $d$ given by \eqref{valab} and the relation \eqref{csumsqE}, is  $u_1^2 + u_2^2 + u_3^2 + u_4^2$.  Further, the sums of the four columns of the matrix $E_1$ are given by $t_1, t_2, t_3$, and $t_4$,  respectively, where the values of $t_i, i=1, \ldots, 4$, are given by \eqref{valt}. Thus,  the magic sum of the semi-magic matrix $E_1^{*2}$ and the sums of the four columns of the  matrix $E_1$  are all independent of the parameters $p, q, r$, and $s$.
 
We now construct two matrices $X$ and $Y$ by replacing the parameters $p, q, r, s$ in the matrix $E_1$ by  $p_1, q_1, r_1, s_1$ and  $p_2, q_2, r_2, s_2$ respectively, that is, 
\begin{equation}
X=E_1(p_1, q_1, r_1, s_1, u_1, u_2, u_3, u_4), \quad Y = E_1(p_2, q_2, r_2, s_2, u_1, u_2, u_3, u_4).
\label{matricesXY}
\end{equation}

Both the   matrices $X^{*2}$ and $Y^{*2}$ are semi-magic and have the same magic  sum $u_1^2 + u_2^2 + u_3^2 + u_4^2$. Further, since the sums of the columns of the matrix $E_1(p, q, r, s$, $u_1, u_2, u_3, u_4)$ are independent of the parameters $p, q, r$, and $s$, it follows that the sums of the four columns of the matrix $X$ are equal to the sums of the corresponding columns of the matrix $Y$. We have thus  constructed two $4 \times 4$ semi-magic matrices $X$ and $Y$ with the same magic sum and such that the conditions \eqref{condcolXY} stated in Lemma \ref{bimagiclemma1} are satisfied.   
Using these matrices $X$ and $Y$,  the $8 \times 8$ matrix $M$ defined  by \eqref{semibimagicM}  where $J_n=J_4$ is the $4 \times 4$ anti-diagonal matrix,  is already a semi-bimagic matrix with the  magic sums of the matrices $M$ and $M^{*2}$ being   0 and $2(u_1^2 + u_2^2 + u_3^2 + u_4^2)$, respectively.

In order to make $M$ a fully bimagic matrix, the  entries of the matrices $X$ and $Y$ must satisfy  the additional conditions \eqref{addlcond1XY}  and \eqref{addlcond2XY}. We now note that the sum of the entries of the principal diagonal of the matrix $E$ is $(p + q + r + s)(a + b + c + d)$ which becomes 0 on taking $s=-p-q-r$. Accordingly, we take $s_i=-p_i-q_i-r_i, i=1, 2$, and we then get $\sum_{i=1}^4x_{ii}=0$ and also $\sum_{i=1}^4y_{ii}=0$. Hence, the condition \eqref{addlcond1XY} is now satisfied.

It remains to choose the parameters such that the last condition \eqref{addlcond2XY} of Lemma \ref{bimagiclemma1} is also satisfied. This condition may be written as a quadratic equation in the variables $u_i, i=1, \ldots, 4$, with the coefficients being polynomials of degree 8 in the variables $p_i, q_i, r_i, i=1, 2$. While we can reduce this equation to a solvable quadratic equation in the variables $u_i, i=1, \ldots, 4$ for certain values of $p_i, q_i, r_i, i=1, 2$, expressed in parametric terms, the resulting equation and its solution are too cumbersome to write. We, therefore, restrict ourselves to giving below a relatively simpler parametric solution of Eq. \eqref{addlcond2XY} by taking the following numerical values of $p_i, q_i, r_i, i=1, 2$:
\begin{equation}
(p_1, q_1, r_1, p_2, q_2, r_2) =(6,  -1, -2, 3, 1, 2).  \label{valpqr}
\end{equation}
We now get
\begin{equation}
s_1 =-p_1   -q_1   -r_1 = -3, \quad s_2 =-p_2 -q_2 -r_2=-6, \label{vals12}
\end{equation}
and with  these values, Eq. \eqref{addlcond2XY} may be written as follows:
\begin{equation}
144u_1^2 - 87u_1u_2 - 243u_1u_3 - 460u_1u_4 - 144u_2^2 + 820u_2u_3 + 27u_2u_4 + 183u_3u_4=0. \label{condu88}
\end{equation}
Since both $u_3$ and $u_4$ occur only in degree 1 in Eq. \eqref{condu88}, this equation can be readily solved and its complete solution may be written as follows:
\begin{equation}
\begin{aligned}
u_1  & =f(460f - 27g - 183h), \\
 u_2  & = g(460f - 27g - 183h), \\ 
u_3  & = h(460f - 27g - 183h),\\
  u_4  & = 144f^2 - 87fg - 243fh - 144g^2 + 820gh,
\end{aligned}
\label{valu}
\end{equation}
where $f, g$, and $h$ are arbitrary parameters.

Thus, with the numerical values of $p_i, q_i, r_i, s_i, i=1, 2$, given by \eqref{valpqr} and \eqref{vals12}, and the values of $u_i, i=1, \ldots, 4$, given by \eqref{valu} in terms of arbitrary parameters $f, g$, and $h$, the matrices $X$ and $Y$ defined by \eqref{matricesXY} satisfy all the conditions of Lemma \ref{bimagiclemma1}. It now  follows from Lemma \ref{bimagiclemma1} that  the matrix $M$ defined, in terms of arbitrary parameters $f, g$, and $h$, by
\begin{equation}
M(f, g, h)=  \begin{bmatrix} X & -XJ_4 \\ -J_4Y & J_4YJ_4 \end{bmatrix} \label{bimagicM88}
\end{equation}
 is a fully bimagic matrix of order $8$ with the magic sums of the matrices $M(f, g, h)$ and $M^(f, g, h){*2}$ being 0 and $\phi(f, g, h)$, respectively,  where 
\begin{multline}
\phi(f, g, h) =2(33489h^4 - (168360f - 9882g)h^3 + (304138f^2 - 423360fg + 706618g^2)h^2\\
 - (238344f^3 - 288324f^2g + 241056fg^2 + 226278g^3)h \\
+ 232336f^4 - 49896f^3g + 178426f^2g^2 + 216fg^3 + 21465g^4) \label{bimagicsumM88} 
\end{multline}

Since it is cumbersome to write the matrices $X$ and $Y$  in terms of three parameters $f, g$, and $h$, we give below the matrices $X$ and $Y$ explicitly only in terms of the single parameter $h$ by taking $(f, g)=(0, 1)$ in the solution \eqref{valu}: 
\scriptsize
\begin{equation*}
X=\begin{bmatrix}
-2379 h^2+19762 h-5247 & -4209 h^2-30074 h+3339     &    -5307 h^2+20456 h-2961     &    5673 h^2+7288 h-2493 \\ -8601 h^2-10472 h+1557     &    2379 h^2+1544 h+441     &    -183 h^2-2086 h+3141     &    -2013 h^2+40622 h-6417 \\ 2013 h^2-12214 h+9     &    5673 h^2-28222 h+5517     &    -6771 h^2-28432 h+4167     &    -1281 h^2-3536 h+2421  \\  -183 h^2+33424 h-4869     &    5307 h^2+7952 h-3447     &    3111 h^2-23138 h+4203     &    6771 h^2+7126 h+639
\end{bmatrix},
\end{equation*}
and 
\begin{equation*}
Y=\begin{bmatrix}
-5673 h^2+4516 h-2655      &    -4209 h^2-26780 h+3825      &    -5307 h^2+23750 h-2475      &    2379 h^2+21562 h-5085     \\      1281 h^2-15602 h+585      &    5673 h^2-27490 h+5625      &    -6771 h^2-27700 h+4275      &    -2013 h^2-364 h+1845     \\      2013 h^2+38654 h-6795      &    2379 h^2-9070 h-1125      &    -183 h^2-12700 h+1575      &    8601 h^2-5372 h+1935     \\      -6771 h^2+2932 h+315      &    5307 h^2+14540 h-2475      &    3111 h^2-16550 h+5175      &    183 h^2+35674 h-4545
\end{bmatrix}.
\end{equation*}

\normalsize

As a numerical example, taking $h=1$ we get a bimagic matrix which, on adding the constant $33873$ to each entry of the matrix, yields the following $8 \times 8$ bimagic square $P$ consisting of distinct positive integers:
\[
P=\begin{bmatrix}
46009   &    2929   &    46061   &    44341   &    23405   &    21685   &    64817   &    21737     \\[0.05in]  16357   &    38237   &    34745   &    66065   &    1681   &    33001   &    29509   &    51389     \\[0.05in]  23681   &    16841   &    2837   &    31477   &    36269   &    64909   &    50905   &    44065     \\[0.05in]  62245   &    43685   &    18049   &    48409   &    19337   &    49697   &    24061   &    5501     \\[0.05in]  37397   &    16501   &    42137   &    2561   &    65185   &    25609   &    51245   &    30349     \\[0.05in]  1   &    41689   &    45181   &    28709   &    39037   &    22565   &    26057   &    67745     \\[0.05in]  47609   &    50065   &    64069   &    34405   &    33341   &    3677   &    17681   &    20137     \\[0.05in]  37685   &    61037   &    17905   &    15017   &    52729   &    49841   &    6709   &    30061
\end{bmatrix}. 
\]
The magic sums of the matrices $P$ and $P^{*2}$ are 270984 and 11904921032, respectively.

\subsection{A multi-parameter family of $8m \times 8m$ bimagic squares} \label{bimagicsquares8mx8m}
We will now construct a multi-parameter family of bimagic squares of order $8m$ by following a procedure similar to that used in Section \ref{Secpseudobimagicsquares6mx6m}. Accordingly, we will first construct a  bimagic matrix $M_1(f, g)$ in terms of two arbitrary parameters $f$ and $g$ such that the magic sums of the matrices $M_1(f,g)$ and $M_1^{*2}(f, g)$ are 0 and 2, respectively. 

We note that the magic sum \eqref{bimagicsumM88}
 of the matrix $M^{*2}$ defined by \eqref{bimagicM88} is given by a quartic function of $h$ in which the coefficient of $h^4$ is $2.33489=2.183^2$. Hence, following a method described by Fermat (as quoted by Dickson \cite[p. 639]{Di}), we can find a value of $h$ such that the magic sum \eqref{bimagicsumM88} becomes twice a perfect square. We thus get
\begin{multline}
h = (22559519772f^4 + 67072618944f^3g - 265560069708f^2g^2 + 562650703056fg^3\\
\quad \quad  - 495403914781g^4)(732(1049472f^3 + 133054434f^2g - 291355732fg^2\\
 + 60467877g^3)^{-1}.
\label{valh}
\end{multline}
and now the magic sum \eqref{bimagicsumM88} may be written as $2\phi_1^2(f, g)$,
where
\begin{multline}
\phi_(f, g)=(466184221685713535760f^8 - 2420499132106383708288f^7g \\
\quad \quad + 803894990537344341312f^6g^2- 22856568561860566396032f^5g^3 \\
\quad \quad+ 249368254976401818742152f^4g^4 - 681142397840009845341984f^3g^5\\
 \quad \quad+ 910236239570706317048992f^2g^6 - 693731546021888738027136fg^7\\
 \quad \quad+ 262837686159586636186213g^8)/(2928(1049472f^3 + 133054434f^2g \\
- 291355732fg^2 + 60467877g^3)^2), \label{defphifg}
\end{multline}

We now construct a new bimagic matrix $M_1(f, g)$ by substituting the value of $h$ given by \eqref{valh} in the bimagic matrix $M$ defined by \eqref{bimagicM88} and dividing each entry of the resulting matrix by $\phi_1(f, g)$. The entries of the matrix $M_1(f, g)$ will be rational functions of the parameters $f$ and $g$. In view of Lemma \ref{simplelemma}, the matrix $M_1(f, g)$ is bimagic and the magic sums of the matrices $M_1(f, g)$ and $M_1^{*2}(f, g)$ are 0 and 2, respectively. 

To construct a parametrized family of bimagic matrices  of order $8m$ for any positive integer $m > 1$, we replace the parameters $(f, g)$ in the matrix $M_1(f, g)$ by $m^2$ pairs of arbitrary parameters $(f_{ij}, g_{ij}), i=1, \ldots, m, j=1, \dots, m$, and thus obtain $m^2$ bimagic  matrices $M_1(f_{ij}, g_{ij})$ such that all the matrices $M_1(f_{ij}, g_{ij})$ have the same magic sum 0 and all the matrices $M_1^{*2}(f_{ij}, g_{ij})$ have the same magic sum 2. We then apply Lemma \ref{bimagiclemma2} to get a  bimagic square of order $8m$ whose entries are rational functions of the parameters $(f_{ij}, g_{ij}), i=1, \ldots, m, j=1, 1, \dots, m$. By suitably applying Lemma \ref{simplelemma}, we finally get a family of $8 \times 8$ bimagic squares whose entries are given by multivariate polynomials with integer coefficients.

The method yields numerical examples of bimagic  matrices of order $8m$ in which the entries are very large  integers. Accordingly, we do not give any numerical examples. 

\section{Concluding remarks}
In this paper we used families of semi-magic matrices of orders 3 and 4 to construct parametrized families of general magic squares of orders 6 and 8 with bimagic properties. We used these families to construct multi-parameter families of pseudo-bimagic and bimagic squares of orders $6m$ and $8m$, respectively. While we could not find a parametrized family of bimagic squares of order 6, since we have obtained a four-parameter family of pseudo-bimagic squares of order 6, it seems likely that a parametrized family of general bimagic squares of order 6 should exist. Further, as noted in the introduction, bimagic squares of orders less than 5 do not exist, but  there  remains the open problem of finding numerical examples and possibly a parametrized family of general bimagic squares of order 5. It would be quite a challenging problem to find such families of general bimagic squares of orders 5 and 6. We leave these problems for future investigations.

\noindent Ajai Choudhry, 13/4 A Clay Square, Lucknow - 226001, India.

\noindent E-mail address: ajaic203@yahoo.com

\end{document}